\newtheorem{theorem}{Theorem}
\newtheorem{lemma}{Lemma}
\newtheorem{prop}{Proposition}
\newtheorem{corollary}{Corollary}
\theoremstyle{definition}
\theoremstyle{remark}
\newtheorem{remark}{Remark}
\numberwithin{equation}{section}
\begin{document}
\title{ Finite generation of Lie derived powers of associative algebras }

%

\author{Adel Alahmadi}
\address{Department of Mathematics, Faculty of Science, King Abdulaziz University, P.O.Box 80203, Jeddah, 21589, Saudi Arabia}
\email{analahmadi@kau.edu.sa}
\author{Hamed Alsulami}
\address{Department of Mathematics, Faculty of Science, King Abdulaziz University, P.O.Box 80203, Jeddah, 21589, Saudi Arabia}
\email{hhaalsalmi@kau.edu.sa}
%
%

\keywords{associative algebra, Lie algebra, finitely generated\\
Mathematics Subject Classification 2010: 17B60}
\maketitle

\begin{abstract}
Let $A$ be an associative algebra over a field of characteristic $\neq 2$ that is generated by a finite collection of nilpotent elements. We prove that all Lie derived powers of $A$ are finitely generated Lie algebras.
\end{abstract}

\maketitle

\section{Introduction and  Main Result}

All algebras are considered over a field $F$ of  characteristic $\neq 2.$  \newline An associative algebra $A$ gives rise to the Lie algebra $A^{(-)}=(A, [a,b]=ab-ba).$  In this note we address the question first raised by I. Herstein [4] (see also [1]):

given a finitely generated associative algebra $A$ when is the Lie algebra $[A,A]$ finitely generated?

Consider the derived series of the Lie algebra $$A^{(-)}: A^{(-)}=A^{[0]}> A^{[1]}>\cdots, A^{[i+1]}=[A^{[i]},A^{[i]}].$$

Recall that an element $a\in A$ is called \underline{nilpotent} if there exists an integer $n(a)\geq 1$ such that $a^{n(a)}=0.$ We say that $A$ is a\underline{ nil algebra} if every element of $A$ is nilpotent.

E. S. Golod [3] and T. Lenagan, A. Smoktunowicz [6]  constructed examples of infinite dimensional finitely generated nil algebras.

\medskip

\begin{theorem}
Let $A$ be an associative algebra generated by a finite collection of nilpotent elements. Then an arbitrary derived power $A^{[i]}, i\geq1,$ is a finitely generated Lie algebra.
 \end{theorem}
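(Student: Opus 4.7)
The plan is to proceed by induction on $i$, with the case $i=1$ carrying the main structural content and the inductive step requiring a nontrivial re-execution of the base-case argument rather than a straightforward iteration.

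For the base case, the key preliminary observation is that $A/[A,A]$ is finite-dimensional: it is commutative (trivially) and generated by the nilpotent images $\bar a_1,\ldots,\bar a_k$ of the generators of $A$, so it is a quotient of the finite-dimensional algebra $F[x_1,\ldots,x_k]/(x_1^{n_1},\ldots,x_k^{n_k})$. Fix a finite-dimensional vector space complement $V$ of $[A,A]$ in $A$, spanned by a finite list of monomials in the generators. The goal is then to exhibit a finite Lie-generating set for $[A,A]$ extracted from $V$ and the $a_j$. Two tools are central. First, the Leibniz identity $[xy,z]=x[y,z]+[x,z]y$ rewrites brackets involving long monomials in the $a_j$ in terms of brackets involving shorter monomials, at the price of multiplying by a generator on the left or right. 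Second, each inner derivation $\operatorname{ad}(a_j)$ is nilpotent on all of $A$, with $\operatorname{ad}(a_j)^{2n_j-1}=0$ whenever $a_j^{n_j}=0$ (since $L_{a_j}$ and $R_{a_j}$ commute and are both nilpotent); this gives a built-in termination for iterated bracketing by a single generator. A natural candidate generating set consists of the finitely many elements $[V,V]$ together with $\{[a_j,v]\}$ for $v$ ranging over a basis of $V$, and the main work is to verify, by a double induction on monomial length and bracket depth, that every commutator of monomials eventually falls into the Lie subalgebra generated by this set.

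For the inductive step, one must keep in mind that finite Lie-generation of a Lie algebra does not in general pass to its derived subalgebra (already the derived subalgebra of the free Lie algebra of rank $\ge 2$ is not finitely generated). The correct approach is to prove in tandem with the inductive hypothesis ``$A^{[i]}$ is finitely generated as a Lie algebra'' the quantitative companion $\dim_F A/A^{[i]}<\infty$. The latter propagates because the abelianization $A^{[i]}/A^{[i+1]}$ is spanned by the images of a finite Lie-generating set of $A^{[i]}$ and is therefore finite-dimensional. Once one knows that $A^{[i+1]}$ is a Lie ideal of $A$ (derived subalgebras being characteristic in $A^{(-)}$) of finite codimension in $A$, one re-runs the base-case argument on the pair $(A,A^{[i+1]})$ in place of $(A,[A,A])$, using the same combination of Leibniz identity and nilpotent inner derivations.

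The main obstacle is the clean execution of the Leibniz-plus-nilpotence reduction. Finite codimension alone does not suffice (the free-Lie example above shows this), so the nilpotence of the generators must be invoked at precisely the point where unbounded monomial length would otherwise produce infinitely many irreducible brackets. Carrying out the combinatorial bookkeeping to show that every long commutator can be rewritten, modulo the Lie subalgebra generated by our candidate finite set, in terms of shorter commutators is the technical heart of the argument.
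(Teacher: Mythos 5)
There is a genuine gap, and it is fatal to the strategy: the claim that $A/[A,A]$ is finite-dimensional is false. Your argument treats $A/[A,A]$ as an associative algebra generated by the images $\bar a_1,\dots,\bar a_k$, but $[A,A]$ is only a Lie ideal of $A^{(-)}$, not a two-sided associative ideal, so $A/[A,A]$ is merely a vector space and there is no algebra surjection from $F[x_1,\dots,x_k]/(x_1^{n_1},\dots,x_k^{n_k})$ onto it. Concretely, take $A=F\langle x,y\rangle/(x^2,y^2)$, which is generated by two nilpotent elements; a basis of $A$ is given by the alternating words, and $A_{2n}$ is spanned by $(xy)^n$ and $(yx)^n$. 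Every homogeneous commutator of monomials of total degree $2n$ is either $0$ or $\pm\bigl((xy)^n-(yx)^n\bigr)$, so $[A,A]\cap A_{2n}$ is one-dimensional and the images of $(xy)^n$, $n\ge 1$, are linearly independent in $A/[A,A]$; hence $A/[A,A]$ is infinite-dimensional. The same error propagates to your inductive companion statement $\dim_F A/A^{[i]}<\infty$, so both the base case and the inductive step rest on a false premise. What is true --- and what the paper actually uses --- is that the associative quotient $A/id_A(A^{[k]})$ is a nilpotent finitely generated algebra; the real difficulty is to pass from information about the associative ideal $id_A(A^{[k]})$ back to the Lie ideal $A^{[k]}$ itself, and that is precisely what the Pendergrass-type Lemma 1, $[id_A([U,U]),A]\subseteq U$, accomplishes. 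Your proposal has no substitute for this step.

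Beyond this, the part you label the ``technical heart'' --- the double induction showing that every commutator of monomials lands in the Lie subalgebra generated by $[V,V]\cup\{[a_j,v]\}$ --- is only announced, not carried out, so even granting finite codimension the proposal is not a complete proof. For comparison, the paper avoids all monomial bookkeeping and avoids induction on $i$ altogether: it passes to the finitely graded algebra $\tilde A$ presented by the relations $x_i^{n_i}=0$, shows that each $\tilde A/id_{\tilde A}(\tilde A^{[k]})$ is nilpotent (it is a finitely generated PI algebra whose Jacobson radical is nilpotent by Braun's theorem and whose semisimple quotient is commutative by Kaplansky's theorem, hence nilpotent since it is generated by nilpotent elements), and then a short graded argument (Proposition 1), built on Lemma 1, yields finite generation of every $\tilde A^{[k]}$ simultaneously.
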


 \begin{corollary}
 Let $A$ be a finitely generated nil algebra ( see [3], [6] ). Then the Lie algebras $A^{[i]}, i\geq 1,$ are finitely generated.
\end{corollary}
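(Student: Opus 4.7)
The plan is induction on $i$, with the base case $i=1$ doing the heavy lifting.

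First, I would record the key ad-nilpotence fact. If $a \in A$ satisfies $a^n=0$, then the inner derivation $\operatorname{ad}(a)=L_a-R_a$ is nilpotent on $A$, in fact $(\operatorname{ad} a)^{2n-1}=0$, because $L_a$ and $R_a$ commute and each is nilpotent of index $n$. Consequently every generator $a_k$ acts ad-nilpotently on $A$, and any iterated Lie bracket in which a single generator is applied repeatedly terminates in finitely many steps.

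For the base case, I would try to exhibit an explicit finite Lie-generating set $\mathcal{C}$ for $[A,A]$ consisting of the nonzero iterated left-normed commutators $[a_{i_1},[a_{i_2},\ldots,[a_{i_{m-1}},a_{i_m}]\ldots]]$ of length $m\geq 2$, with $m$ bounded by a number depending on the nilpotence indices $n(a_k)$; the ad-nilpotence of the $\operatorname{ad}(a_k)$'s ensures that $\mathcal{C}$ is finite. The work is to prove that an arbitrary commutator $[u,v]$ of associative monomials $u,v$ in the $a_k$'s is a Lie polynomial in $\mathcal{C}$. I would expand $[u,v]$ via the derivation identities $[xy,z]=x[y,z]+[x,z]y$ and $[x,yz]=[x,y]z+y[x,z]$ to reduce to $A$-bimodule combinations of the basic commutators $[a_i,a_j]$, and then, using Jacobi and ad-nilpotence of the $\operatorname{ad}(a_k)$'s, rewrite the bimodule ``decorations'' as further iterated Lie brackets, closing the rewriting procedure after finitely many steps.

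For the inductive step, suppose $A^{[i]}$ is Lie-finitely-generated by $b_1,\ldots,b_r$. Then $A^{[i+1]}$ is the Lie ideal of $A^{[i]}$ generated by $\{[b_p,b_q]\}$, i.e., the linear span of $\operatorname{ad}(b_{k_1})\cdots\operatorname{ad}(b_{k_s})[b_p,b_q]$. The crucial check is that each $\operatorname{ad}(b_j)$ remains nilpotent on $A^{[i]}$, a property inherited (in the regime we care about) from the nilpotence of the $\operatorname{ad}(a_k)$'s via the fact that each $b_j$ is a Lie polynomial in the $a_k$'s. With that in hand, the iterated brackets terminate and only finitely many nonzero products of $\operatorname{ad}(b_j)$'s applied to the $[b_p,b_q]$'s survive, giving a finite Lie-generating set for $A^{[i+1]}$.

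The principal obstacle is the base case: verifying that the finite set $\mathcal{C}$ actually Lie-generates all of $[A,A]$. Translating the $A$-bimodule expansion of a general commutator into an honest Lie polynomial in $\mathcal{C}$ requires delicate combinatorial bookkeeping in which the ad-nilpotence is used essentially both to bound lengths and to terminate the rewriting. A secondary subtlety lies in the inductive step, in justifying that the adjoint actions of the Lie-polynomial generators $b_j$ inherit enough nilpotence to re-run the base-case machinery at the next level of the derived series.
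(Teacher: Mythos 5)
Your proposal does not follow the paper's route: in the paper this corollary is an immediate consequence of Theorem 1, because the finitely many generators of a nil algebra are themselves nilpotent elements, so there is nothing further to prove. What you attempt instead is a from-scratch proof, and it has two genuine failures. First, the proposed generating set $\mathcal{C}$ of left-normed commutators $[a_{i_1},[a_{i_2},\ldots]]$ lies inside the Lie subalgebra $L$ of $A^{(-)}$ generated by $a_1,\ldots,a_m$, hence so does the Lie algebra that $\mathcal{C}$ generates; but $[A,A]$ is the linear span of all $[u,v]$ with $u,v$ arbitrary monomials and is in general strictly larger than $L$. Already $[a_1a_2,a_3]=a_1[a_2,a_3]+[a_1,a_3]a_2$ is not a Lie element of the free associative algebra, and in $F\langle x_1,x_2,x_3\rangle/(x_1^{n_1},x_2^{n_2},x_3^{n_3})$ with all $n_i\geq 2$ the multilinear degree-$3$ component is untouched by the relations, so $[x_1x_2,x_3]\in[A,A]$ is not a Lie polynomial in the generators and a fortiori not in the subalgebra generated by $\mathcal{C}$. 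No amount of rewriting with the Jacobi identity can repair this; the generating set the paper actually uses (Proposition 1) is the full graded piece $A^{[i]}\cap\bigl(\sum_{k=1}^{2n-2}A_k\bigr)$, not the Lie monomials in the generators.

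Second, both of your termination claims rest on the false principle that a product of individually nilpotent operators vanishes once it is long enough. Each $\operatorname{ad}(a_k)$, and each $\operatorname{ad}(b_j)$ since $A$ is nil, is indeed nilpotent, but this gives no bound whatsoever on mixed products $\operatorname{ad}(b_{k_1})\cdots\operatorname{ad}(b_{k_s})$; asserting that these die for large $s$ amounts to asserting nilpotence of the algebra they generate, and Golod's examples exist precisely to show that finitely generated nil algebras need not be nilpotent. This is the central difficulty of the theorem, and the paper circumvents it not by working in $A$ but by passing to the quotients $B_k=\tilde A/\mathrm{id}_{\tilde A}(\tilde A^{[k]})$, whose nilpotence is extracted from the identity $f_k=0$ via Braun's theorem on the Jacobson radical of a finitely generated PI algebra together with Kaplansky's theorem; the degree bound $2n-2$ for the generators comes from there. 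Your sketch replaces none of this. Note also that your inductive step, if it worked, would show that $A^{[i+1]}$ is spanned by finitely many elements, i.e., is finite-dimensional --- a far stronger conclusion than finite generation, and one that ad-nilpotence of the generators cannot deliver.
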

 The key sufficient condition for finite generation of $A^{[i]}$ ( see the Proposition 1 below) is essentially based on the work  of C. Pendergrass ( see [7] ). In particular, the following
lemma is a version of Lemma 3 in [7].

\begin{lemma}
 Let $U$ be an ideal of the Lie algebra $A^{(-)}.$ Then $[id_A([U,U]),A]\subseteq U$
\end{lemma}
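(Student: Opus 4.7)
The plan is to strengthen the claim to $\mathrm{id}_A([U,U]) \subseteq U + U^2$, where $U^2$ denotes the $F$-span of products $uv$ with $u,v \in U$, and then to show that every element of $U+U^2$ commutes with $A$ modulo $U$. Equivalently, setting $T := \{x \in A : [x,A] \subseteq U\}$, I would establish $\mathrm{id}_A([U,U]) \subseteq U + U^2 \subseteq T$, which implies the lemma. Since $U \subseteq T$ is immediate from the Lie-ideal hypothesis on $U$, the real content is $U^2 \subseteq T$.

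The heart of the argument is two ``exchange'' identities, each obtained by direct expansion: for $u_1,u_2 \in U$ and $a \in A$,
$$[u_1 a, u_2] - [u_1, u_2 a] = u_1[a,u_2] + u_2[a,u_1], \qquad [u_2, au_1] - [au_2, u_1] = -\bigl([a,u_1]u_2 + [a,u_2]u_1\bigr).$$
Both left sides lie in $[A,U] \subseteq U$, yielding (*) $u_1[a,u_2] + u_2[a,u_1] \in U$ and (**) $[a,u_1]u_2 + [a,u_2]u_1 \in U$. To deduce $U^2 \subseteq T$, expand $[u_1 u_2, c] = u_1[u_2,c] + [u_1,c]u_2$ and apply (*), (**) with $a=c$: this gives $u_1[u_2,c] \equiv -u_2[u_1,c]$ and $[u_1,c]u_2 \equiv -[u_2,c]u_1$ modulo $U$, hence $[u_1 u_2, c] + [u_2 u_1, c] \in U$. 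Combined with the trivial $[u_1 u_2 - u_2 u_1, c] = [[u_1,u_2],c] \in U$, we obtain $2[u_1 u_2, c] \in U$; dividing by $2$ (char $\neq 2$) yields $[u_1 u_2, c] \in U$.

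It remains to reduce $\mathrm{id}_A([U,U])$ into $U+U^2$. From $[au_1,u_2] = a[u_1,u_2] + [a,u_2]u_1$ we solve $a[u_1,u_2] = [au_1,u_2] - [a,u_2]u_1 \in U + U^2$, so $A\cdot[U,U] \subseteq U+U^2$; a symmetric computation gives $[U,U]\cdot A \subseteq U+U^2$. For the two-sided case $A\cdot[U,U]\cdot A$, first note that $[U,U]$ is itself a Lie ideal of $A^{(-)}$: by Jacobi, $[[u_1,u_2],a] = [u_1,[u_2,a]] - [u_2,[u_1,a]] \in [U,U]$. Hence for $v \in [U,U]$ and $a,b \in A$ we have $avb = vab + [a,v]b$ with $[a,v] \in [U,U]$, so both pieces lie in $[U,U]\cdot A \subseteq U+U^2$. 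Assembling, $\mathrm{id}_A([U,U]) \subseteq U + U^2 \subseteq T$, which is the conclusion. The technical crux is proving $U^2 \subseteq T$, where both exchange identities and the characteristic hypothesis must come into play.
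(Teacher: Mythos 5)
Your argument is correct, and it is essentially the paper's own proof with the Jordan product $x\circ y=xy+yx$ unwound into explicit associative products: your two exchange identities (*) and (**) sum to the paper's key computation $[a,u_1\circ u_2]=[a\circ u_1,u_2]+[a\circ u_2,u_1]\in[A,U]\subseteq U$, and your final step $2[u_1u_2,c]=[u_1\circ u_2,c]+[[u_1,u_2],c]\in U$ is exactly where the paper invokes $xy=\tfrac{1}{2}([x,y]+x\circ y)$ and characteristic $\neq 2$. The only real difference is organizational: you reduce $id_A([U,U])$ to $U+U^2$ and bracket with $A$ once at the end, whereas the paper writes $id_A([U,U])=[U,U]+[U,U]A$ and tracks the pieces $[[U,U],A]$, $U$, and $U\circ U$ separately.
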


 \begin{proof}
Since $[U,U]$ is also an ideal of $A^{(-)}$ it follows that $id_A([U,U])=[U,U]+[U,U]A.$ Denote $x\circ y =xy+yx.$ The following identities are well known:
\begin{enumerate}
  \item $xy=\frac{1}{2}([x,y]+x\circ y)$
  \item $[z,x\circ y]=[z,x]\circ y+[z,y]\circ x$
  \item  $[z,x\circ y]=[z\circ x, y]+[z\circ y, x]$
\end{enumerate}

By the identity $(1)$  $$[U,U] A\subseteq [[U,U],A]+[U,U]\circ A.$$ Clearly $[[U,U],A]\subseteq U$ and $[[[U,U],A],A]\subseteq U.$

By the identity $(2)$ $$[U,U]\circ A\subseteq [U,U\circ A]+[U,A]\circ U\subseteq U+ U\circ U.$$

By the identity $(3)$ $$ [U\circ U, A]\subseteq [U,U\circ A]\subseteq U,$$ which completes the proof of the lemma.
 \end{proof}

We say that an algebra $A$ is finitely graded if $A=A_1+A_2+\cdots$ is a direct sum, $ A_i A_j\subseteq A_{i+j},$ the algebra $A$ is generated by $A_1$ and $\dim_FA_1<\infty.$

\begin{prop}
 Let $A$ be a finitely graded algebra. Suppose that all factor algebras $A/id_A(A^{[i]})$ are nilpotent. Then all derived powers $A^{[i]}, i\geq 1,$ are finitely generated Lie algebras.
\end{prop}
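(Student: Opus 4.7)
My plan is to exhibit a finite-dimensional homogeneous subspace $W \subseteq A^{[i]}$ such that the Lie subalgebra $L \subseteq A^{[i]}$ it generates is all of $A^{[i]}$, proved by induction on the grade $m$.

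To locate $W$, I would first feed Lemma 1 with $U = A^{[i]}$, yielding $[id_A(A^{[i+1]}), A] \subseteq A^{[i]}$. Since $A$ is finitely graded, the nilpotent quotient $A/id_A(A^{[i+1]})$ is in fact finite dimensional, so there is an integer $n$ with $A_k \subseteq id_A(A^{[i+1]})$ for every $k \geq n$. Using the identity $id_A([U,U]) = [U,U] + [U,U] A$ from the proof of Lemma 1, any homogeneous $y \in A_m$ with $m \geq n$ decomposes as $y = u + \sum v_j a_j$ with $u, v_j \in A^{[i+1]}$ and $a_j \in A$ all homogeneous. I would then take $W := A^{[i]} \cap (A_1 \oplus A_2 \oplus \cdots \oplus A_N)$ for $N \geq n$ to be chosen large enough.

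The induction $L_m = (A^{[i]})_m$ is clear for $m \leq N$ by construction. For $m > N$, decompose a homogeneous $y \in (A^{[i]})_m$ as above. The piece $u \in (A^{[i+1]})_m = [A^{[i]}, A^{[i]}]_m$ is a sum of brackets $[c_k, d_k]$ of homogeneous elements of $A^{[i]}$ of degrees strictly less than $m$, so $u \in L$ by induction. For each product $v_j a_j$, identity (1) splits it as $\tfrac12([v_j, a_j] + v_j \circ a_j)$; writing $v_j = [c, d]$ inside $A^{[i]}$ and applying Jacobi rewrites $[v_j, a_j]$ in terms of iterated brackets like $[c, [d, a_j]]$, whose inner factors $[c, a_j]$ and $[d, a_j]$ lie in the Lie ideal $A^{[i]}$ at grade $< m$ and are therefore in $L$ by induction.

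The chief obstacle is the Jordan-product residue $\sum v_j \circ a_j$: by subtraction it lies in $A^{[i]}$, but the relation $\sum v_j \circ a_j = 2(y-u) - \sum [v_j, a_j]$ is circular in $y$, so we must descend independently. My plan is to transport the chain of inclusions in the proof of Lemma 1 into graded form — namely $A^{[i+1]} \circ A \subseteq A^{[i]} + A^{[i]} \circ A^{[i]}$ via identity (2), together with $[A^{[i]} \circ A^{[i]}, A] \subseteq A^{[i]}$ via identity (3) — and track them grade by grade, so that $\sum v_j \circ a_j$ is eventually rewritten as commutators between elements of $A^{[i]}$ of strictly smaller grade. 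Closing the induction will likely require strengthening the inductive statement to simultaneously control $L_m$ and the auxiliary pieces $(A^{[i]} \circ A^{[i]})_m \cap A^{[i]}$; this bookkeeping, essentially the Pendergrass input from [7], is where the technical heart of the proof lies.
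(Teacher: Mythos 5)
Your overall architecture (a finite-dimensional homogeneous generating candidate $W$, induction on the grade $m$) matches the paper's, but there is a genuine gap exactly where you flag one: the Jordan-product residue $\sum_j v_j\circ a_j$ is never disposed of, and the repair you sketch does not close. If you expand $v_j\circ a_j$ with $v_j=[z,x]$ via identity (2), you get $[z,x\circ a_j]-[z,a_j]\circ x$; the first term lies in $[A^{[i]},A]\cap A_m$, i.e.\ it is again a general degree-$m$ element of the ideal you are trying to generate, presented as a bracket whose second factor $x\circ a_j$ need not lie in $A^{[i]}$ --- so the induction does not descend, and strengthening the inductive statement to control $(A^{[i]}\circ A^{[i]})_m$ just reintroduces the same circularity one level up. More structurally, decomposing $y=u+\sum_j v_ja_j$ via $id_A(A^{[i+1]})=A^{[i+1]}+A^{[i+1]}A$ amounts to re-running the proof of Lemma 1 inside the induction, which is the wrong place for it.

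The missing idea is to apply Lemma 1 one derived power deeper: take $U=A^{[i+1]}$, so the conclusion reads $[id_A(A^{[i+2]}),A]\subseteq A^{[i+1]}=[A^{[i]},A^{[i]}]$. The right-hand side is now a space of brackets of elements of $A^{[i]}$, and it is graded, so its degree-$m$ component is $\sum_{p+q=m}[A^{[i]}\cap A_p,\,A^{[i]}\cap A_q]$ with $p,q<m$. Choosing $n$ with $A^n\subseteq id_A(A^{[i+2]})$ (the nilpotence hypothesis is invoked for the index $i+2$, not $i+1$), the paper takes $W=A^{[i]}\cap\sum_{k=1}^{2n-2}A_k$: for $m\geq 2n-1$ one has $A^{[i]}\cap A_m\subseteq\sum_{p+q=m}[A_p,A_q]$, in each summand $p\geq n$ or $q\geq n$, hence $[A_p,A_q]\subseteq[id_A(A^{[i+2]}),A]\subseteq[A^{[i]},A^{[i]}]$, and gradedness finishes the step. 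With this choice no Jordan products appear in the induction at all; they are confined to the proof of Lemma 1, where they belong.
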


\begin{proof}
 Let $i\geq 1.$ Consider the ideal $U=A^{[i+1]}=[A^{[i]},A^{[i]}]$ of the Lie algebra $A^{(-)}.$ By Lemma 1 we have $$[id_A(A^{[i+2]}),A]\subseteq [A^{[i]},A^{[i]}].$$
 By the assumption of the proposition there exists $n\geq1$ such that $A^n\subseteq A^{[i+2]},$ and, therefore $\sum\limits_{j\geq n} A_j\subseteq id_A(A^{[i+2]}).$
 We claim that the Lie algebra $A^{[i]}$ is generated by the finite dimensional subspace $A^{[i]}\cap \left(\sum\limits_{k=1}^{2n-2}A_k\right).$
 Indeed, since $A^{[i]}$ is a graded subspace of $A$ we need to check that if $m\geq 2n-1$ then $A^{[i]}\cap A_m$ lies in the Lie subalgebra generated by
  $A^{[i]}\cap \left(\sum\limits_{k=1}^{m-1}A_k\right).$ We have $A^{[i]}\cap A_m\subseteq \sum\limits_{p+q=m} [A_p,A_q].$
  Since $m\geq2n-1$ it follows that $p\geq n$ or $q\geq n.$ If $p\geq n$ then $A_p\subseteq id_A(A^{[i+2]})$ and therefore $$[A_p,A_q]\subseteq [id_A(A^{[i+2]},A]\subseteq [A^{[i]},A^{[i]}].$$ It implies $$ A^{[i]}\cap A_m\subseteq\sum [A^{[i]}\cap \sum\limits_{k=1}^{m-1}A_k,
A^{[i]}\cap \sum\limits_{k=1}^{m-1}A_k]$$ and completes the proof of the proposition.

 \end{proof}

\begin{proof}[Proof of Theorem 1.]
Let the algebra $A$ be generated by elements $a_1,a_2,\cdots,a_m$ such that $a_i^{n_i}=0, n_i\geq 1,\, 1\leq i \leq m.$
Consider the algebra $\tilde{A}$ presented by generators $x_1,x_2,\cdots,x_m$ and relations $x_i^{n_i}=0, n_i\geq 1,\, 1\leq i \leq m.$
The algebra $\tilde{A}$ is finitely graded and the algebra $A$ is a homomorphic image of the algebra $\tilde{A}.$ It is sufficient to prove that for an arbitrary $k\geq 1$ the Lie algebra $\tilde{A}^{[k]}$ is finitely generated. Moreover, by Proposition 1 it is sufficient to prove that all factor algebras $B_k=\tilde{A}/id_{\tilde{A}}( \tilde{A}^{[k]})$ are nilpotent.  Consider the following elements of the free associative algebra

\begin{alignat*}{2}
     f_1(x_1,x_2)&=[x_1,x_2] \\
      f_s(x_1,\cdots,x_{2^s})&=[f_{s-1}(x_1,\cdots,x_{2^{s-1}}),f_{s-1}(x_{2^{s-1}+1},\cdots,x_{2^{s}})], \text{ for } s\geq 2.
\end{alignat*}

The algebra $B_k$ is a finitely generated algebra that satisfies the polynomial identity $f_k(x_1,\cdots,x_{2^k})=0.$
Hence  the Jacobson radical $J(B_k)$ is nilpotent (see [2]). The semisimple algebra $B_k/J(B_k)$ is a subdirect product of primitive algebras satisfying the identity $f_k=0.$

It follows from Kaplansky theorem ( see [5] ) that a primitive algebra satisfying the identity $f_k=0$ is a field. Hence the algebra $B_k/J(B_k)$ is commutative.
The commutative algebra $B_k/J(B_k)$ is generated by a finite collection of nilpotent elements. Hence the algebra $B_k/J(B_k)$ is nilpotent, hence the algebra $B_k=J(B_k)$ is nilpotent. This completes the proof of the theorem.

\end{proof}

\begin{remark}
In [1] we showed that if $A$ is a finitely generated algebra with an idempotent $e$ such that $$AeA=A(1-e)A=A\,\,\,(*)$$ then the Lie algebra $A^{[1]}=[A,A]$ is finitely generated . The condition $(*)$ is equivalent to $A$ being generated by $eA(1-e)+(1-e)Ae.$ Since elements from $eA(1-e)$ and $(1-e)Ae$ are nilpotent this result follows from Theorem 1.

\end{remark}
\newpage

\end{document}